\newtheorem{theorem}{Theorem}[section]
\newtheorem{corollary}[theorem]{Corollary}
\newtheorem{lemma}[theorem]{Lemma}
\newtheorem{proposition}[theorem]{Proposition}
\theoremstyle{definition}
\newtheorem{definition}[theorem]{Definition}
\theoremstyle{remark}
\newtheorem{remark}[theorem]{Remark} 
\newtheorem{example}[theorem]{Example}
\newcommand{\E}{\mathbb{E}}
\newcommand{\1}{\mathbbm{1}} 
\newcommand{\VaR}{\text{VaR}}
\newcommand{\AVaR}{\text{AVaR}}
\DeclareMathOperator*{\essinf}{ess\,inf}
\DeclareMathOperator*{\esssup}{ess\,sup}
\newcommand{\N}{\mathbb{N}_{0}}
\newcommand{\ud}{\mathrm{d}}
\newcommand{\PP}{\mathbb{P}}
\newcommand{\msc}[1]{\textbf{MSC2010 Classification:} #1.}
\newcommand{\jel}[1]{\textbf{JEL Classification:} #1.}
\newcommand{\keywords}[1]{\textbf{Key words:} #1.}
\begin{document}
	
	\title{Markov risk mappings and risk-sensitive optimal prediction\thanks{The authors would like to thank the Isaac Newton Institute for Mathematical Sciences for support and hospitality during the programme `The Mathematics of Energy Systems' when work on this paper was undertaken. This work was partially supported by EPSRC grant numbers EP/R014604/1, EP/N013492/1 and EP/P002625/1. This work was supported by the Lloyd’s Register Foundation-Alan Turing Institute programme on Data-Centric Engineering under the LRF grant G0095.}
	}
	
	
	\author[]{Tomasz Kosmala\thanks{Corresponding author. Email: t.kosmala@qmul.ac.uk} }
	\author[]{Randall Martyr}
	\author[]{John Moriarty}
	
	\affil[]{School of Mathematical Sciences, Queen Mary University of London, 
		\\ Mile End Road, London E1 4NS, United Kingdom \\
		Tel.: +44 (0)20 7882 5440}
	
	\date{\today}

	\maketitle
	
	\begin{abstract}
		We formulate a probabilistic  Markov property in discrete time under a dynamic risk framework with minimal assumptions. This is useful for recursive solutions to risk-sensitive versions of dynamic optimisation problems such as optimal prediction, where at each stage the recursion depends on the whole future. The property holds for standard measures of risk used in practice, and is formulated in several equivalent versions including a representation via acceptance sets, a strong version, and a dual representation.
		
		\vspace{+4pt}
		\keywords{Markov property, risk measures, optimal stopping}
		\vspace{+4pt}
		
		\msc{60G40, 91B08, 91B06, 90C40}
		\vspace{+4pt}
		
		\jel{C61, D81}
	\end{abstract}
	
	\section{Introduction}
	
	The Markov property is a main tool used in the dynamic evaluation of risk, for example in the solution of risk-sensitive optimisation problems. In this paper we present a probabilistic formulation of the Markov property under a risk framework with minimal assumptions, which we call {\em dynamic conditional risk mappings}, and give applications to optimal prediction, a class of risk-sensitive stochastic optimisation problems.
	
	To fix ideas, let $X = (X_t)_{t \in \N}$ be a Markov chain taking values in a measurable space $E$ and let $(\Omega,\mathcal{F},(\mathcal{F}_t)_{t \in \N},\mathbb{P}^x)$ be its canonical probability space, where $X_0=x$, $\mathbb{P}^x-$a.s. and $\N=\{0,1,\ldots\}$. Let $\varrho=((\rho_t^x)_{t \in \N})_{x \in E}$ be the family of conditional linear expectations given by  
	\begin{align}\label{linearcase}
		\rho^{x}_{t}(Z) = 
		\begin{cases}
			\mathbb{E}^{x}[Z], & t=0, \\ 
			\mathbb{E}^{x}[Z \vert \mathcal{F}_{t}], & t \geq 1,
		\end{cases} 
	\end{align}
	where $Z$ is an arbitrary bounded random variable depending on the whole sample path (that is, $Z$ is measurable with respect to $(\Omega,\mathcal{F})$). Then $\varrho$ is Markovian in the sense that 
	\begin{equation}\label{Markov_prop_intro}
		\rho^{x}_{t}(Z \circ \theta_{t}) = \rho^{X_{t}}_0(Z) \;\;\mathbb{P}^{x}\text{-a.s. for each $t \in \N$},
	\end{equation}
	where $\theta_t$ is the shift operator, and we would like to generalise this property to an appropriately large class of nonlinear (that is, risk-sensitive) families $\varrho$.
	
	A number of settings have been given for the Markov property under dynamic risk frameworks. Broadly they are formulated either on functions of the state of the Markov process (that is, analytically), or on the canonical probability space (that is, probabilistically). 
	In the linear case the probabilistic and analytic formulations are equivalent, and below we obtain sufficient conditions on $\varrho$ for their equivalence (Proposition \ref{pro_general_Markov_property}).
	
	Analytic formulations, which are often based on so-called transition risk mappings (cf.\ Definition \ref{def:amp}), are useful in recursive solution techniques which evaluate risk only one step ahead, taking $Z=f(X_{t+1})$ in \eqref{linearcase}. Since probabilistic formulations apply to the whole path of $X$, they are useful for recursions which directly evaluate risk multiple steps ahead, taking $Z=f(X_{t+1}, X_{t+2},...)$. In optimal prediction problems, for example, the evaluation of risk depends on the evolution of the process after a user-selected stopping time: for instance, the problem of stopping as close as possible to the ultimate maximum of a time-homogeneous Markov chain $X$ taking values in $E = \mathbb{R}$ (cf.\ \cite{Allaart,Yam_Yung_Zhou} in the case of linear expectation):
	\begin{equation}
		\label{sensitive_opimal_pred}
		V_\text{pred}^T(x) := \inf_{\tau \in \mathscr{T}_{[0,T]}} \rho^x_0(X_T^* - X_\tau),
	\end{equation}
	where $T \in \N$, $X_T^* := \max_{0 \leq s \leq T} X_s$, and $\mathscr{T}_{[0,T]}$ is the set of stopping times taking values in $\{0,1,\ldots,T\}$ (see also \cite{Toit_Peskir,Pedersen} for work in continuous time). 
	In the aforementioned studies, explicit solutions have been obtained for this problem by applying the probabilistic Markov property to represent the objective as a function of $\tau$ and $X_\tau$, obtaining a function $F$ such that
	\begin{equation*}
		\E^x[ f(X_T^* - X_\tau)] = \E^x[ F(\tau,X_\tau)],
	\end{equation*}
	see e.g. page 1077 in \cite{Allaart}.
	The probabilistic Markov property, which is satisfied by the commonly used entropic, mean semi-deviation, $\VaR$, $\AVaR$ and worst-case risk mappings (see Section \ref{sec:examples}), is applied in Section \ref{sec:op} to solve \eqref{sensitive_opimal_pred} recursively.
	
	The evaluation of risk for random variables via so-called sublinear (and therefore convex) functionals goes back to \cite{Lebedev1992,Lebedev1993}, where results including dual representations are obtained in both the static and conditional settings. To enable dynamic programming for risk-sensitive Markov decision processes, dynamic risk-sensitive frameworks have also been proposed using analytic formulations of the Markov property under the assumption of time consistency. In \cite{Ruszczynski} a dynamic setting is introduced in which risk-sensitive Markov decision processes are studied in both finite and infinite time horizon. Also with infinite time horizon, the average risk of controlled Markov processes is studied in \cite{Shen2013} while \cite{Cavus_Ruszczynski} address the undiscounted total risk of transient controlled Markov processes.
	In \cite{Fan2018a, Fan2018b} a structure for dynamic risk measures is introduced based on a stronger concept of stochastic conditional time consistency.
	Utility-based (also known as certainty equivalent) frameworks are special cases using the Markov property under linear expectation, see for example \cite{Bauerle2014,Bauerle2017b}.
	Other frameworks are presented in \cite{Bartl2019} using analytic sets and in \cite{Pichler2019} using the Kusuoka representation. 
	
	While the dynamic risk-sensitive frameworks above involve a reference probability measure, analytic settings of the Markov property also exist in risk-sensitive frameworks without such a measure. When the state space $E$ is finite, these frameworks include Markov chains under imprecise expectations, which are related to sensitivity analyses under a set of possible transition probabilities for the Markov process $(X_{t})_{t \in \N}$, see for example \cite{deCooman_Hermans_Quaeghebeur_2009,Hartfiel,Krak_Bock_Siebes}. More generally they include nonlinear expectations which, in \cite{peng2005nonlinear} and \cite{Nendel2021}, are related to finite-dimensional properties of so-called nonlinear Markov chains. As in the present paper, in \cite{Denk_Kupper_Nendel} the framework is related to the infinite dimensional path space of Markov processes, although convexity of the nonlinear expectation is then assumed. 
	Also without a reference measure, the risk forms of \cite{Dentcheva_Ruszczynski_forms} have been applied to the optimisation of partially observable two-stage systems. 
	
	The general study of dynamic conditional risk mappings can also be approached via backward stochastic differential or difference equations, see \cite{Cohen_Elliott2008,Cohen_Elliott2010}.
	In contrast to the latter setup our risk mappings do not assume time consistency. In the other direction, in \cite{martyr_moriarty_perninge_2022} reflected backward stochastic difference equations are derived from dynamic conditional risk mappings, in the study of non-Markovian optimal switching problems.
	
	In the present work we assume a reference measure and make minimal further assumptions. Time consistency is not assumed, making our formulation applicable to risk mappings including mean semi-deviation and average value at risk (cf.\ Section \ref{sec:examples}). In the time-consistent case we make the connection to analytic formulations, and provide a recursive solution to the optimal prediction problem.
	
	For convex risk mappings we characterise the Markov property in terms of the dual representation (see for example \cite{Artzner_Delbaen_Eber_Heath,Delbaen,Detlefsen_Scandolo,Frittelli_Rosazza_Gianin,Lebedev1992,Lebedev1993}).
	More precisely, we show that a Markovian convex risk mapping can be characterised as a supremum over penalised linear expectations with respect to certain transition kernels, extending the dual representation of transition risk mappings beyond the coherent case studied in \cite{Ruszczynski}. We also obtain sufficient conditions under which the latter structure implies the probabilistic Markov property.
	
	The paper is structured as follows. Section \ref{sec:Markov-Dynamic-Conditional-Risk-Mappings} provides the probabilistic framework, together with equivalences between versions of the Markov property, and a representation in terms of acceptance sets. Section \ref{sec:examples} gives examples and Section \ref{sec:dual} addresses the dual representation, while applications to optimisation problems are given in Section \ref{sec:Risk-Averse-Optimal-Stopping}.

	\section{A probabilistic Markov property for risk mappings}\label{sec:Markov-Dynamic-Conditional-Risk-Mappings}
	
	After presenting the setup and briefly recalling necessary definitions (Section \ref{sec:mkc}), in Sections \ref{sec:mpos} and \ref{sec:ef} we provide our novel probabilistic setting for the Markov property and establish equivalent forms. The Markov property in terms of acceptance sets is studied in Section \ref{sec:mpas}.
	
	\subsection{Setup and notation}\label{sec:mkc}
	
	Suppose we have an $E$-valued time-homogeneous Markov process $(X_{t})_{t \in \N}$ with respect to the filtered probability space $(\Omega,\mathcal{F},\mathbb{F},\mathbb{P})$, where:
	\begin{itemize}
		\item[\textbullet] $E$ is a Polish space equipped with its Borel $\sigma$-algebra $\mathcal{E}$,
		\item[\textbullet] $\N = \{0,1,2,\ldots\}$ is the discrete time parameter set,
		\item[\textbullet] $\Omega$ is the canonical space of trajectories $\Omega = E^{\N}$,
		\item[\textbullet] $X$ is the coordinate mapping, $X_{t}(\omega) = \omega(t)$ for $\omega \in \Omega$ and $t \in \N$,
		\item[\textbullet]  $\mathbb{F} = (\mathcal{F}_{t})_{t \in \N}$ with $\mathcal{F}_{t} = \sigma(\{X_{s} \colon s \le t\})$ the natural filtration generated by $X$ and $\mathcal{F} = \sigma(\bigcup_{t \in \N} \mathcal{F}_{t})$.
	\end{itemize}
	Let $\mathscr{P}(\mathcal{F})$ denote the set of probability measures on $(\Omega,\mathcal{F})$. Unless otherwise specified, all inequalities between random variables will be interpreted in the almost sure sense with respect to the appropriate probability measure. We write $\mathscr{T}$ for the set of finite-valued stopping times and $\mathscr{T}_{[t,T]}$ for the set of stopping times taking values in $\{t,t+1,\ldots,T\}$. We denote by $b\mathcal{F}$ the space of bounded random variables on $(\Omega,\mathcal{F})$ and similarly for other $\sigma$-algebras. 
	It will also be convenient to define $\mathcal{F}_{t,\infty} = \sigma(X_{s} \colon s \ge t)$ and $\mathcal{F}_{t,t} = \sigma(X_t)$. 
	
	In the above setup the following objects exist:
	\begin{itemize} 
		\item[\textbullet] The law $\mu^{X_{0}}$ of $X_0$ under $\mathbb{P}$ and a family of probability measures defined by the measurable mapping $x \mapsto \mathbb{P}^x$ from $E$ to $\mathscr{P}(\mathcal{F})$, which is a disintegration of $\mathbb{P}$ with respect to $X_0$ (see \cite{DellacherieMeyer1978}, p.\ 78). To be precise, this family satisfies $\mathbb{P}^{x}(X_{0} = x) = 1$ and
		for every $F \in \mathcal{F}$ we have
		\begin{equation*}
			\mathbb{P}(F) = \int_{E}\mathbb{P}^{x}(F)\,\mu^{X_{0}}(\ud x).
		\end{equation*}
		\item[\textbullet] A time-homogeneous Markov transition kernel $q^{X} \colon \mathcal{E} \times E \to [0,1]$ such that for every $x \in E$ and $B \in \mathcal{E}$ we have $q^{X}(B \vert x) = \mathbb{P}^{x}\big(X_{1} \in B \big)$, 
		\item[\textbullet] Markov shift operators $\theta_{t} \colon \Omega \to \Omega$, $t \in \N$ such that $\theta_{0}(\omega) = \omega$, $\theta_{t} \circ \theta_{s} = \theta_{t+s}$ and $(X_{t} \circ \theta_{s})(\omega) = X_{t+s}(\omega)$ for each $\omega \in \Omega$ and $s,t \in \N$.
	\end{itemize}
	
	For $\tau \in \mathscr{T}$ define the random shift operator $\theta_{\tau}$ by
	\[
	\begin{split}
		\theta_{\tau}(\omega) & = \theta_{\tau(\omega)}(\omega), \\
		& = \theta_{t}(\omega)\;\; \text{on}\;\; \{\tau(\omega) = t\}.
	\end{split}
	\]
	
	We recall the definitions of risk mapping and conditional risk mapping (which are interchangeable via the mapping $Z \mapsto \rho(-Z)$ with the {\it monetary conditional risk measures} of \cite{Follmer_Schied}, Def.\ 11.1):
	
	\begin{definition}[Risk mapping]\label{Definition:Risk-Mapping}
		A risk mapping on the probability space $(\Omega,\mathcal{F},\mathbb{P}^x)$ is a function $\rho^x \colon b\mathcal{F} \to \mathbb{R}$ satisfying
		\begin{description}
			\item[\it Normalisation:] $\rho^x(0) = 0$,
			\item[\it Translation invariance:] $\forall\; Z \in b\mathcal{F}$ and $c \in \mathbb{R}$ we have $\rho^x(Z + c) = c + \rho^x(Z)$,
			\item[\it Monotonicity:] $\forall\; Z,Z' \in b\mathcal{F}$, we have $Z \le Z' \, \PP^x\text{-a.s.}	\implies \rho^x(Z) \le \rho^x(Z')$.
		\end{description}
	\end{definition}

	\begin{definition}[Conditional risk mapping]\label{Definition:Conditional-Risk-Mapping}
		A conditional risk mapping  on the probability space $(\Omega,\mathcal{F},\mathbb{P}^x)$ 
		with respect to the $\sigma$-algebra $\mathcal{F}_t \subseteq \mathcal{F}$ 
		is a function $\rho_t^x \colon b\mathcal{F} \to b\mathcal{F}_t$ satisfying:
		\begin{description}
			\item[\it Normalisation:] $\rho_t^x(0) = 0$ $\mathbb{P}^x$-a.s.,
			\item[\it Conditional translation invariance:] $\forall\; Z \in b\mathcal{F}$ and $Z' \in b\mathcal{F}_t$,
			\[
			\rho_t^x(Z + Z') = Z' + \rho_t^x(Z), \qquad \PP^x\text{-a.s.}
			\]
			\item[\it Monotonicity:] $\forall\; Z,Z' \in b\mathcal{F}$,
			\[
			Z \le Z' \, \PP^x\text{-a.s.}
			\implies \rho_t^x(Z) \le \rho_t^x(Z') \,\PP^x\text{-a.s.}
			\]
		\end{description}
	\end{definition}
	
	Conditional risk mappings also satisfy the following property (cf.\ \cite{Cheridito_Delbaen_Kupper_dyn}, Prop.\ 3.3 and \cite{Follmer_Schied}, Ex.\ 11.1.2):
	\begin{description}
		\item[\it Conditional locality:] for every $Z$ and $Z'$ in $b\mathcal{F}$ and $A \in \mathcal{F}_t$, we have $\PP^x$-a.s.
		\[
		\rho_t^x(\1_{A}Z + \1_{A^{c}}Z') = \1_{A}\rho_t^x(Z) + \1_{A^{c}}\rho_t^x(Z').
		\]
	\end{description}
	\begin{definition}[Dynamic conditional risk mapping]\label{Definition:Dynamic-Conditional-Risk-Mapping}
		For each $x \in E$ a dynamic conditional risk mapping on the filtered probability space $(\Omega,\mathcal{F},\mathbb{F},\mathbb{P}^x)$ is a sequence $(\rho_{t}^x)_{t \in \N}$ where 
		\begin{itemize}
			\item $\rho_0^x$ is a risk mapping,
			\item for each $t \geq 1$, $\rho_{t}^x$ is a conditional risk mapping on $(\Omega,\mathcal{F},\mathbb{P}^x)$ with respect to $\mathcal{F}_{t}$. 
		\end{itemize}
		We use the superscript $x$ in $(\rho_{t}^x)_{t \in \N}$ to indicate a dynamic conditional risk mapping on $(\Omega,\mathcal{F},\mathbb{F},\mathbb{P}^x)$.
	\end{definition}
	
	Note that the codomain of $\rho_0^x$ is $\mathbb{R}$ while, for each $t \geq 1$, the codomain of $\rho_t^x$ is $b{\mathcal{F}_t}$. This setup is motivated by the fact that any $\mathcal{F}_0$-measurable random variable is $\mathbb{P}^x$-a.s.\ constant. For example, for each $x \in E$, the sequence $(\rho_t^x)_{t \in \N}$ given by \eqref{linearcase} is a dynamic conditional risk mapping.
	
	For a finite stopping time $\tau$ define
	\[
	\rho_{\tau} = \sum_{t \in \N}\1_{\{\tau = t\}}\rho_{t},
	\]
	noting that $\rho_{\tau} \colon b\mathcal{F} \to b\mathcal{F}_{\tau}$.
	
	In some results below we will assume continuity.
	
	\begin{definition}
		Let $t\in \mathbb{N}_0$, $x \in E$. We say that $\rho^x_{t}$ is continuous from below (resp.\ from above) if $\rho^x_{t}(Y_n) \to \rho^x_{t}(Y)$ $\PP^x$-a.s.\ for every increasing (resp.\ decreasing) sequence $(Y_n)_{n \in \N}$ in $b\mathcal{F}$ converging $\PP^x$-a.s.\ to $Y \in b\mathcal{F}$.
	\end{definition}
	Note that results for decreasing risk maps (e.g.\ in \cite{Follmer_Penner}) requiring continuity from above can be applied to increasing risk maps of Definitions \ref{Definition:Risk-Mapping}-\ref{Definition:Dynamic-Conditional-Risk-Mapping} if continuity from below is assumed.

	\subsection{Markov property}\label{sec:mpos}
	
	We begin with measurability with respect to the initial state of the Markov process, referring to this as regularity.
	
	\begin{definition}[Regularity]\label{defn:reg}
		A collection of risk mappings $(\rho^x)_{x \in E}$ is said to be {\em regular} if for all $Z \in b\mathcal{F}$ the map $x \mapsto \rho^x(Z)$ is bounded and measurable.
	\end{definition}
	
	\begin{definition}[Markov property]\label{Definition:Markov-Conditional-Risk-Mapping}
		The family $\varrho:=((\rho_t^x)_{t \in \N})_{x \in E}$ of dynamic conditional risk mappings satisfies the {\it Markov property} (for the chain $(X_t)_{t \in \N}$) if 
		\begin{enumerate}
			\item $(\rho_0^x)_{x \in E}$ is regular,
			\item for each $x \in E$, $Z \in b\mathcal{F}$ and $t \in \N$ we have
			\begin{equation}\label{eq:Markov-Risk-Property}
				\rho^{x}_{t}(Z \circ \theta_{t}) = \rho^{X_{t}}(Z) \;\;\mathbb{P}^{x}\text{-a.s.},
			\end{equation}
		\end{enumerate}
		where $\rho^{X_{t}}(Z)$ is interpreted as the random variable $\omega \mapsto \rho^{X_{t}(\omega)}(Z)$. 
	\end{definition}
	
	By construction, if $\varrho:=((\rho_t^x)_{t \in \N})_{x \in E}$ is a family of dynamic conditional risk mappings then $(\rho_0^x)_{x \in E}$ is a collection of risk mappings. For convenience we often write $\rho^x$ for $\rho^x_0$.
	
	In particular we have
	\begin{equation}\label{eq:rcrml}
		\rho^x(Z)=\rho^x(\1_{\{x\}}(X_0)Z), \qquad Z \in b\mathcal{F}, \; x \in E.
	\end{equation}
	
	Note that the linear conditional expectation \eqref{linearcase} satisfies this Markov property and corresponds to the risk-neutral case. Examples of $\rho$ which are risk sensitive are presented in Section \ref{sec:examples}.
	
	\begin{remark}
		Note that \eqref{eq:Markov-Risk-Property} could have been specified differently. For example, by relating all risk mappings $\rho_t^x$ to the same regular collection $(\rho^x)_{x \in E}$ in \eqref{eq:Markov-Risk-Property} we have imposed a time homogeneity on the measurement of risk. This is not essential, since taking a collection $\{\rho^{x,s}:  x \in E, s \in \N\}$  indexed also by time and specifying
		\begin{equation}\tag{3'}
			\rho^{x}_{t}(Z \circ \theta_{t}) = \rho^{X_{t},t}(Z) \;\;\mathbb{P}^{x}\text{-a.s.},
		\end{equation}
		the family of dynamic conditional risk mappings may be time-heterogeneous. 
	\end{remark}
	
	A regular collection $(\rho^x)_{x \in E}$ of risk mappings can also be used to construct a Markovian family $\varrho=((\rho_t^x)_{t \in \N})_{x \in E}$ satisfying Definition \ref{Definition:Markov-Conditional-Risk-Mapping}, as follows. We use the fact that any bounded $\mathcal{F}$-measurable random variable $Z$ can be represented as $Z=f(X_0,X_1,\ldots)$ for some measurable and bounded function $f \colon E^{\mathbb{N}_0} \to \mathbb{R}$, which follows by standard monotone class arguments, see \cite{Blumenthal_Getoor_Markov}, Prop.\ 0.2.7 or \cite{Cinlar2011}, Th.\ 2.4.4. As it is obtained without reference to any probability measure, the equality $Z=f(X_0,X_1,\ldots)$ holds for all (rather than almost all) $\omega \in \Omega$ and therefore the function $f$ is unique.
	
	\begin{proposition}
		\label{pro_associated_dynamic}
		Let $(\rho^x)_{x \in E}$ be regular. For each $x \in E$, $t \in \N$ and  $Z=f(X_0,X_1,\ldots)$ let
		\begin{equation*}
			\rho^x_t(Z)(\omega) \coloneqq \rho^{X_t(\omega)}(Z_t(X_0(\omega),\ldots,X_t(\omega))), \qquad \omega \in \Omega,
		\end{equation*}
		where
		\begin{equation*}
			Z_t(x_0,\ldots,x_t) \coloneqq f(x_0,\ldots, x_t, X_1,X_2,\ldots).
		\end{equation*}
		Then for each $x \in E$, $(\rho^x_t)_{t \in \N}$ is a dynamic conditional risk mapping and  the family $\varrho = ((\rho^x_t)_{t \in \N})_{x \in E}$ satisfies the Markov property.
	\end{proposition}

	\begin{proof} 
		Let $x \in E$, $t \in \N$ and $\omega \in \Omega$ be arbitrary. For compactness we will write $X_{0:t}(\omega)$ for $(X_0(\omega),\ldots, X_t(\omega)) \in E^{t+1}$. 
		Clearly $\rho_t^x$ is normalised, so we check conditional translation invariance and monotonicity.
		Taking $Z = f(X_0,X_1,\ldots) \in b\mathcal{F}$ and $W = g(X_0,\ldots,X_t) \in b\mathcal{F}_t$, by construction we have 
		\begin{align*}
			\rho_t^x(Z+W)(\omega) 
			&= \rho^{X_t(\omega)}(Z_t(X_0(\omega),\ldots, X_t(\omega)) + W_t(X_0(\omega),\ldots, X_t(\omega))) \\
			&= \rho^{X_t(\omega)}(Z_t(X_0(\omega),\ldots, X_t(\omega))) + W_t(X_0(\omega),\ldots, X_t(\omega)) \\
			&= \rho_t^x(Z)(\omega) + W(\omega).
		\end{align*}
		
		To check monotonicity let $Z = f(X_0,X_1,\ldots)$ and $Z' = f'(X_0,X_1,\ldots)$ be two bounded random variables such that $Z \leq Z'$ $\mathbb{P}^x$-a.s.
		We first show that $Z_t(X_{0:t}(\omega)) \leq Z'_t(X_{0:t}(\omega)), \PP^{X_t(\omega)}$-a.s. 
		Writing as usual $\PP^x(A \vert \mathcal{F}_t)$ for $\E^x[1_A \vert \mathcal{F}_t]$ for each $A \in \mathcal{F}$, and applying conditional locality and the Markov property, for almost all $\omega$ we have, with a slight abuse of notation
		\begin{align*}
			1 &= \PP^x(Z \leq Z' \vert \mathcal{F}_t)(\omega) \\
			& = \PP^x(f(X_{0:t}(\omega),X_{t+1}, \ldots) \leq f'(X_{0:t}(\omega), X_{t+1}, \ldots)\vert \mathcal{F}_t)(\omega) \\
			& = \PP^{X_t(\omega)}(f(X_{0:t}(\omega),X_1, \ldots) \leq f'(X_{0:t}(\omega),X_1, \ldots))
			\\
			& = \PP^{X_t(\omega)}(Z_t(X_{0:t}(\omega)) \leq Z'_t(X_{0:t}(\omega))).
		\end{align*}
		By the monotonicity of $\rho^{X_t(\omega)}$ we then have that $\mathbb{P}^x$-a.s., 
		\begin{align*}
			\rho_t^x(Z)(\omega) = \rho^{X_t(\omega)}\big(Z_t(X_{0:t}(\omega))\big) \leq \rho^{X_t(\omega)}\big(Z_t'(X_{0:t}(\omega))\big) = \rho_t^x(Z')(\omega).
		\end{align*}
		
		Lastly we verify the Markov property for the family $\varrho$.
		For $Z=f(X_0,X_1,\ldots)$ we have by construction and \eqref{eq:rcrml} that $\PP^x$-a.s.
		\begin{equation*}
			\rho_t^x(Z \circ \theta_t)(\omega) 
			= \rho^{X_t(\omega)}(f(X_t(\omega),X_1,X_2,\ldots))
			= \rho^{X_t(\omega)}(Z).
		\end{equation*} 
	\end{proof}

	\subsection{Equivalent forms of the Markov property}\label{sec:ef}
	
	Just as for the linear conditional expectation, the Markov property for risk mappings can be stated in several equivalent forms. We begin with the strong Markov property.
	
	\begin{proposition}[Strong Markov Property]
		\label{pro_strong_Markov}
		If $\varrho:=((\rho_t^x)_{t \in \N})_{x \in E}$ satisfies the Markov property  
		then for any stopping time $\tau \in \mathscr{T}$ and $Z \in b\mathcal{F}$
		we have
		\[
		\rho^{x}_{\tau}(Z \circ \theta_{\tau}) = \rho^{X_{\tau}}(Z)
		\;\;\mathbb{P}^{x}\text{-a.s.}.
		\]
	\end{proposition}
	\begin{proof}
		Using $\{\tau = t\} \in \mathcal{F}_{t}$, conditional locality and the Markov property
		we have $\mathbb{P}^{x}$-a.s.:
		\begin{align*}
			\rho^{x}_{\tau}(Z \circ \theta_{\tau}) 
			= \sum_{t=0}^{\infty}\1_{\{\tau = t\}}\rho^{x}_{t}(Z \circ \theta_{t})	
			= \sum_{t=0}^{\infty}\1_{\{\tau = t\}}\rho^{X_{t}}(Z) 
			= \rho^{X_{\tau}}(Z). 
		\end{align*}
	\end{proof}
	
	To make a connection to one-step Markov properties we will require time consistency:
	
	\begin{definition}\label{Definition:Time-Consistent}
		The family $\varrho:=((\rho_t^x)_{t \in \N})_{x \in E}$ is said to be time consistent if for all $Y,Z \in b\mathcal{F}$, $t \in \N$ and $x \in E$ we have
		\begin{equation*}
			\rho^x_{t+1}(Y) \leq \rho^x_{t+1}(Z) \; \PP^x\text{-a.s.} \implies \rho^x_t(Y) \leq \rho^x_t(Z) \; \PP^x\text{-a.s.}
		\end{equation*}
		We say that a regular collection $(\rho^x)_{x \in E}$ of risk mappings is {\it time consistent} if the associated Markovian dynamic conditional risk mapping (constructed in Proposition \ref{pro_associated_dynamic}) is time consistent.
	\end{definition}
	
	It is well known (see e.g. \cite{Acciaio_Penner}, Prop.\ 1.16) that we then have the following recursive relation: for every $x \in E$ and $0 \leq s\leq t$,
	\begin{equation*}
		\rho^x_s = \rho^x_s \circ \rho^x_t.
	\end{equation*}
	As noted in \cite{Follmer_Schied}, Exercise 11.2.2, this relation can be generalised to stopping times: for any bounded stopping times $\tau_1 \leq \tau_2$ one has
	\begin{equation}
		\label{recursivity_stopping_times}
		\rho_{\tau_1}^x = \rho_{\tau_1}^x \circ \rho_{\tau_2}^x.
	\end{equation}
	
	Since risk mappings are nonlinear in general, in the next proof we use a non-standard version of the Monotone Class Theorem (see Appendix \ref{sec:MCT})
	which, unlike \cite{Blumenthal_Getoor_Markov}, Th.\ 0.2.3, does not appeal to vector spaces.
	
	\begin{proposition}
		\label{pro_general_Markov_property}
		Let $\varrho:=((\rho_t^x)_{t \in \N})_{x \in E}$ be a family of dynamic conditional risk mappings such that  
		$(\rho^x)_{x \in E}$ is regular.
		Let each $\rho_t^x$ be continuous from above and below: that is, $\rho^x_{t}(Y_n) \to \rho^x_{t}(Y)$ a.s.\ for every $t\in \mathbb{N}_0$, $x \in E$ and monotone sequence $(Y_n)_{n \in \N}$ in $b\mathcal{F}$ converging to $Y \in b\mathcal{F}$. Then
		
		\begin{enumerate}
			\item[(i)] $\varrho$ is Markov iff for all $k\geq 0$ the $k$-step Markov property holds:
			\begin{equation}\label{eq:ksmp}
				\rho_t^x(f(X_{t+1},\ldots,X_{t+k})) = \rho^{X_t}(f(X_1,\ldots ,X_k)),
			\end{equation}
			$\mathbb{P}^x$-a.s.\ for all $t \in \N$, $x \in E$ and bounded measurable functions $f\colon E^{k} \to \mathbb{R}$.
			\item[(ii)] If the family $\varrho$ is time consistent, then $\varrho$ is Markov iff the one-step Markov property holds: for every $t\in \N$, $x \in E$ and bounded measurable function $f \colon E \to \mathbb{R}$ we have
			\begin{equation}
				\label{1_step_Markov}
				\rho_t^x(f(X_{t+1})) = \rho^{X_t}(f(X_1)), \qquad \mathbb{P}^x\text{-a.s.}.
			\end{equation}
		\end{enumerate}
	\end{proposition}
	
	\begin{remark}
		The assumptions of this proposition simplify in the case of convex risk mappings, for which continuity from above implies continuity from below (see proof of Corollary \ref{cor_general_Markov}).
	\end{remark}

	\begin{proof}
		For both claims (i) and (ii), the `only if' part is trivial and so it remains to establish the `if' part. We first prove this for claim (ii).
		
		Therefore let the family $\varrho$ be time consistent and suppose that the one-step Markov property \eqref{1_step_Markov} holds. We begin by showing, proceeding by induction on $k$, that the Markov property \eqref{eq:Markov-Risk-Property} holds for the class of simple functions -- that is, functions of the form 
		\begin{equation}\label{eq:general_markov_property_2}
			f(x_{t+1},\ldots,x_{t+k}) = \sum_{j=1}^{n}\alpha_{j} g_{j}(x_{t+1},\ldots,x_{t+k}), \quad t \in \N, n \ge 1, \alpha_{i} \in \mathbb{R}.
		\end{equation}	
		This is true for $k = 1$ since this is a special case of the one-step Markov property \eqref{1_step_Markov}. Suppose it is also true for some $k \ge 1$. We have
		\begin{align}\label{eq:general_markov_property_3}
			f(x_{t+1},\ldots,x_{t+k+1}) & = \sum_{j=1}^{n}\alpha_{j} g_{j}(x_{t+1},\ldots,x_{t+k+1}) \notag \\ 
			& = \sum_{j=1}^{n}\alpha_{j} \left(\prod_{i=1}^{k+1}\1_{A_{ij}}(x_{t+i})\right) \notag \\ 
			& = \sum_{j=1}^{n}\1_{A_{1j}}(x_{t+1})\left(\alpha_{j}\prod_{i=2}^{k+1}\1_{A_{ij}}(x_{t+i})\right).
		\end{align}
		By taking all possible intersections of the sets $A_{11},\ldots,A_{1n}$ and their complements, we can define $N \ge n$ mutually disjoint sets $\tilde{A}_{1},\ldots,\tilde{A}_{N}$ belonging to $\mathcal{E}$ such that
		\begin{align*}
			\sum_{j=1}^{n}\1_{A_{1j}}(x_{t+1})\left(\alpha_{j}\prod_{i=2}^{k+1}\1_{A_{ij}}(x_{t+i})\right) & = \sum_{\ell=1}^{N}\1_{\tilde{A}_{\ell}}(x_{t+1})\left(\sum_{j=1}^{n}\tilde{\alpha}_{\ell j}\prod_{i=2}^{k+1}\1_{A_{ij}}(x_{t+i})\right),
		\end{align*}
		where $\tilde{\alpha}_{\ell j} = \alpha_{j}$ if $A_{1j} \cap \tilde{A}_{\ell} \neq \emptyset$ and $\tilde{\alpha}_{\ell j} = 0$ otherwise. Therefore we can rewrite $f$ in \eqref{eq:general_markov_property_3} as
		\begin{equation}
			\label{first_component_taken_out}
			f(x_{t+1},\ldots,x_{t+k+1}) = \sum_{\ell=1}^{N}\1_{\tilde{A}_{\ell}}(x_{t+1})f_{\ell}(x_{t+2},\ldots,x_{t+k+1}),
		\end{equation}
		where the $\tilde{A}_{\ell}$ are mutually disjoint and each $f_{\ell}$ has the form \eqref{eq:general_markov_property_2}. Using the local property and time consistency for $\rho_t^x$, the induction hypothesis and the one-step Markov property we have
		\begin{align}\label{eq:general_markov_property_5}
			\rho_t^x(f(X_{t+1},\ldots, X_{t+k+1})) & = \rho_t^x\left(\sum_{\ell=1}^{N}\1_{\tilde{A}_{\ell}}(X_{t+1})f_{\ell}(X_{t+2},\ldots,X_{t+k+1})\right) \notag \\
			& =  \rho_t^x\left(\rho_{t+1}^x\left( \sum_{\ell=1}^{N}\1_{\tilde{A}_{\ell}}(X_{t+1})f_{\ell}(X_{t+2},\ldots,X_{t+k+1})\right)\right) \notag \\
			& =  \rho_t^x\left(\sum_{\ell=1}^{N}\1_{\tilde{A}_{\ell}}(X_{t+1}) \rho_{t+1}^x\left( f_{\ell}(X_{t+2},\ldots,X_{t+k+1})\right)\right) \notag \\
			& = \rho_t^x\left(\sum_{\ell=1}^{N}\1_{\tilde{A}_{\ell}}(X_{t+1})\rho^{X_{t+1}}(f_{\ell}(X_{1},\ldots,X_{k}))\right) \notag \\
			& = \rho^{X_{t}}\left( \sum_{\ell=1}^{N}\1_{\tilde{A}_{\ell}}(X_{1}) \rho^{X_{1}}(f_{\ell}(X_{1},\ldots,X_{k}))\right).
		\end{align}
		Note that for every realisation $x_{t}$ of $X_{t}(\omega)$ we have that almost surely under $\mathbb{P}^{x_t}$,
		
		\begin{align}\label{eq:general_markov_property_6}
			\sum_{\ell=1}^{N}\1_{\tilde{A}_{\ell}}(X_{1})\rho^{X_1}\left(f_{\ell}(X_{1},\ldots,X_{k})\right) 
			& = \sum_{\ell=1}^{N}\1_{\tilde{A}_{\ell}}(X_{1})\rho_1^{x_t}\left(f_{\ell}(X_{2},\ldots,X_{k+1})\right) \notag \\
			& = \rho_1^{x_t}\left( \sum_{\ell=1}^{N}\1_{\tilde{A}_{\ell}}(X_{1}) f_{\ell}(X_{2},\ldots,X_{k+1})\right) \notag \\
		\end{align}
		Therefore, by \eqref{first_component_taken_out}--\eqref{eq:general_markov_property_6} and time-consistency we have for almost every $\omega \in \Omega$:
		\begin{align*}
			\rho_t^x(f(X_{t+1},\ldots, X_{t+k+1}))(\omega)
			&= \rho^{X_t(\omega)} \left( \rho_1^{X_t(\omega)}\left( \sum_{\ell=1}^{N}\1_{\tilde{A}_{\ell}}(X_{1}) f_{\ell}(X_{2},\ldots,X_{k+1})\right) \notag \right) \nonumber\\
			&= \rho^{X_t(\omega)}(f(X_1,\ldots, X_{k+1})), \label{eq:kmp}
		\end{align*}
		and, by induction, the Markov property \eqref{eq:Markov-Risk-Property} holds for all functions $f$ of the form \eqref{eq:general_markov_property_2}.
		\vskip0.1em
		
		Next we appeal to the monotone class theorem. Let $\mathscr{H}_0$ be the set of random variables having the form $Z=f(X_0,\ldots,X_{k})$ for some $k \in \mathbb{N}_0$ and some $f$ of the form \eqref{eq:general_markov_property_2}.
		Clearly $\mathscr{H}_0$ is closed under the operation of taking the pointwise minimum.
		Let 
		\begin{equation*}
			\mathscr{H} \coloneqq \{Z \in b\mathcal{F} \colon \rho_t^x(Z \circ \theta_t) = \rho^{X_t}(Z)\,\,\, \mathbb{P}^x\text{-a.s.\ for all } x \in E, t \in \N\}.
		\end{equation*}
		
		We show that $\mathscr{H}_0 \subset \mathscr{H}$. Suppose that $Z=f(X_0,\ldots, X_k) \in \mathscr{H}_0$. 
		Then by conditional locality and the fact that $Z\in \mathscr{H}_0$ we have that for each $\omega \in \Omega$,
		\begin{align*}
			\rho_t^x(Z \circ \theta_t)(\omega) & = \rho_t^x(f(X_t(\omega), X_{t+1}, \ldots, X_{t+k}))(\omega) \\ & = \rho^{X_t(\omega)}(f(X_t(\omega), X_1, \ldots, X_{k}))(\omega) = \rho^{X_t(\omega)}(f(X_0, X_1, \ldots, X_{k}))(\omega),
		\end{align*}
		i.e. $Z \in \mathscr{H}$.
		
		The space $\mathscr{H}$ is closed under monotone limits and Theorem \ref{th_from_Bogachev} 
		implies that $\mathscr{H}$ contains all bounded $\sigma(\mathscr{H}_0)$-measurable functions. Since $\sigma(\mathscr{H}_0) = \mathcal{F}$ we conclude that the Markov property \eqref{eq:Markov-Risk-Property} holds on $\mathscr{H} = b\mathcal{F}$, completing the proof of claim (ii).
		
		To prove claim (i), note that \eqref{eq:ksmp} applies directly to all functions $f$ of the form \eqref{eq:general_markov_property_2}, in which case time consistency does not need to be assumed. We then appeal to the monotone class theorem as we did for claim (ii).
	\end{proof}
	
	The following result shows that Markovian families of conditional risk mappings which are continuous from above and below can be represented using the canonical form given in Proposition \ref{pro_associated_dynamic}.
	
	\begin{proposition}\label{pro_extended_Markov_property}
		Let $\varrho \coloneqq ((\rho_t^x)_{t \in \N})_{x \in E}$ be a family of dynamic conditional risk mappings continuous from above and below. Then $\varrho$ satisfies the Markov property if and only if for all $x \in E$, $t \in \N$, $Z = f(X_{0},X_{1},\ldots) \in b\mathcal{F}$ and $\PP^{x}$-almost every $\omega \in \Omega$ we have
		\begin{equation*}\label{eq:past-present-future-Markov-property}
			\rho^{x}_{t}(Z)(\omega) = \rho^{X_{t}(\omega)}(Z_{t}(X_{0}(\omega),\ldots,X_{t}(\omega))),
		\end{equation*}
		where $Z_{t}(x_{0},\ldots,x_{t}) \coloneqq f(x_{0},\ldots,x_{t},X_{1},X_{2},\ldots)$.
	\end{proposition}
	
	The proof is omitted as it follows the path analogous to that of Proposition \ref{pro_general_Markov_property}, namely showing the claimed property for simple random variables and appealing to the monotone class theorem.
	
	\begin{remark}\label{rem_extended_Markov_property}
		\begin{enumerate}
			\item[(i)] If in addition to the hypotheses of Proposition \ref{pro_extended_Markov_property} the family $\varrho$ is time consistent, then we recover a version of the Markov property which is similar to that of \cite{Nendel2021}:
			\[
			\rho^{x}(Z) = \rho^{x}\Big(\rho^{X_{t}}(Z_{t}(X_{0},\ldots,X_{t}))\Big), \quad \forall x \in E, t \in \N,  Z \in b\mathcal{F},
			\]
			where $Z = f(X_{0},X_{1},\ldots)$ and $Z_{t}(x_{0},\ldots,x_{t}) \coloneqq f(x_{0},\ldots,x_{t},X_{1},X_{2},\ldots)$.
			Note that in Definition 1.2 in \cite{Nendel2021} random variables of the form $Z = f(X_{0},X_{1},\ldots, X_t,X_{t+s})$ are taken into account.
			\item[(ii)] In \cite{Denk_Kupper_Nendel} a Kolmogorov-type theorem is established for conditional risk mappings which, like Proposition \ref{pro_extended_Markov_property}, leads to a risk mapping on path space, and Example 5.3 of the latter paper explores the case of discrete-time Markov chains.
		\end{enumerate}
	\end{remark}
	
	\subsection{Markov property in terms of acceptance sets}
	\label{sec:mpas}
	
	Particularly in the context of mathematical finance, conditional risk mappings can be characterised by their {\em acceptance sets} $\mathcal{A}_t^x$  \cite{Acciaio_Penner}, Sec.\ 1.4.1 or \cite{Follmer_Schied}, Sec.\ 4.1, where 
	\begin{equation*}
		\mathcal{A}_t^x := \{ Y \in b\mathcal{F} : \rho_t^x(Y) \leq 0 \,\, \mathbb{P}^x\text{-a.s.}\},
	\end{equation*}
	and so for completeness we also formulate the Markov property in these terms. 
	First define another acceptance set, which will be useful in formulating the Markov property:
	\begin{equation*}
		\tilde{\mathcal{A}}_t^x = \{ Y\in b\mathcal{F} : \rho^{X_t}(Y) \leq 0 \,\, \mathbb{P}^x\text{-a.s.}\}.
	\end{equation*}
	
	Note that for any $\mathcal{F}_{t,\infty}$-measurable random variable $Y=\hat{Y} \circ \theta_t$ with the representation $Y = f(X_t,X_{t+1},\ldots)$ one can define $Y\circ \theta_{-t} := \hat{Y} = f(X_0,X_1,\ldots)$.

	\begin{lemma}\label{lem:equivalence_acceptance_sets} 
		The family $\varrho:=((\rho_t^x)_{t \in \N})_{x \in E}$ is Markov if and only if $\rho_t^x \colon b\mathcal{F}_{t,\infty} \to b\mathcal{F}_{t,t}$ and for each $x \in E$, $t \in \N$ and $Z \in b\mathcal{F}$ we have
		\begin{equation}
			\label{equivalence_acceptance_sets}
			Z\circ \theta_t \in \mathcal{A}_t^x \iff Z \in \tilde{\mathcal{A}}_t^x.
		\end{equation}
	\end{lemma}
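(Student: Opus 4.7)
For the forward direction, assume the Markov property \eqref{eq:Markov-Risk-Property}. Any $Y \in L^\infty_{\mathcal{F}_{t,\infty}}(\mathbb{P}^x)$ admits a representative in $b\mathcal{F}_{t,\infty}$, which by Remark 2.5(iii) can be written as $Z \circ \theta_t$ for some $Z \in b\mathcal{F}$. Setting $g(y) := \rho^y(Z)$, which lies in $b\mathcal{E}$ by Definition 2.2, gives $\rho^x_t(Y) = \rho^{X_t}(Z) = g \circ X_t$, and this is $\mathcal{F}_{t,t}$-measurable; hence $\rho^x_t$ maps $L^\infty_{\mathcal{F}_{t,\infty}}$ into $L^\infty_{\mathcal{F}_{t,t}}$. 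The equivalence \eqref{equivalence_acceptance_sets} then follows by unwinding definitions: $Z \circ \theta_t \in \mathcal{A}^x_t$ iff $\rho^x_t(Z \circ \theta_t) \leq 0$ $\mathbb{P}^x$-a.s., iff $\rho^{X_t}(Z) \leq 0$ $\mathbb{P}^x$-a.s., iff $Z \in \tilde{\mathcal{A}}^x_t$.

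For the converse, fix $Z \in b\mathcal{F}$, set $g(y) := \rho^y(Z)$, and aim to show $\rho^x_t(Z \circ \theta_t) = g(X_t)$ $\mathbb{P}^x$-a.s. The plan is to inject two deterministic shifts into the acceptance-set equivalence, one built from $g$ and one from the conditional risk itself, so as to convert one-sided membership statements into a two-sided identity. For the upper bound, set $W := Z - g(X_0) \in b\mathcal{F}$. Since $\mathbb{P}^y(X_0 = y) = 1$, constant translation invariance of $\rho^y$ yields $\rho^y(W) = \rho^y(Z) - g(y) = 0$ for every $y \in E$, so $\rho^{X_t}(W) = 0$ and $W \in \tilde{\mathcal{A}}^x_t$. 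The hypothesis then places $W \circ \theta_t = Z \circ \theta_t - g(X_t)$ in $\mathcal{A}^x_t$, and conditional translation invariance (note $g(X_t)$ is $\mathcal{F}_t$-measurable) produces $\rho^x_t(Z \circ \theta_t) \leq g(X_t)$ $\mathbb{P}^x$-a.s.

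For the matching lower bound, use the first assumption $\rho^x_t \colon L^\infty_{\mathcal{F}_{t,\infty}} \to L^\infty_{\mathcal{F}_{t,t}}$ to pick a bounded measurable $h\colon E \to \R$ with $\rho^x_t(Z \circ \theta_t) = h(X_t)$ $\mathbb{P}^x$-a.s., and repeat the construction with $W' := Z - h(X_0) \in b\mathcal{F}$. Conditional translation invariance gives $\rho^x_t(W' \circ \theta_t) = \rho^x_t(Z \circ \theta_t) - h(X_t) = 0$, so $W' \circ \theta_t \in \mathcal{A}^x_t$, and the reverse implication of \eqref{equivalence_acceptance_sets} places $W'$ in $\tilde{\mathcal{A}}^x_t$. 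The same pointwise computation then yields $\rho^{X_t}(W') = g(X_t) - h(X_t) \leq 0$ $\mathbb{P}^x$-a.s., giving $g(X_t) \leq \rho^x_t(Z \circ \theta_t)$ and completing the argument. The main subtlety I anticipate is this doubling step: one must ensure that $h$ can be chosen as a globally bounded measurable function on $E$, so that $h(X_0) \in b\mathcal{F}$ and the pointwise identity $\rho^y(Z - h(y)) = g(y) - h(y)$ is legal for every $y \in E$, rather than only $\mathbb{P}^x \circ X_t^{-1}$-almost every $y$.
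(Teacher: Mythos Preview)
Your proof is correct. The forward direction matches the paper's (which simply calls necessity ``obvious''), and your converse is sound: the upper bound via $W = Z - g(X_0)$ works because $\rho^y(W)=0$ holds pointwise for every $y$, and the lower bound via $W' = Z - h(X_0)$ works once $h$ is chosen bounded and measurable. On the subtlety you flag: since $\rho^x_t(Z\circ\theta_t)\in L^\infty_{\mathcal{F}_{t,t}}(\mathbb{P}^x)$, Doob--Dynkin gives a measurable $\tilde h$ with $\rho^x_t(Z\circ\theta_t)=\tilde h(X_t)$ $\mathbb{P}^x$-a.s., and truncating $\tilde h$ at the $\mathbb{P}^x$-essential bound yields a globally bounded measurable $h$ with $h(X_t)=\tilde h(X_t)$ $\mathbb{P}^x$-a.s. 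The pointwise identity $\rho^y(Z-h(y))=g(y)-h(y)$ then holds for every $y\in E$ by constant translation invariance, which is all you need.

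The paper takes a different route for the converse. Rather than sandwiching directly, it establishes two $\essinf$ representations: first
\[
\rho^{X_t}(Z) = \essinf\{\,Y=g(X_t)\in L^\infty_{\mathcal{F}_{t,t}} : Z - Y\circ\theta_{-t}\in\tilde{\mathcal{A}}^x_t\,\},
\]
then (via \cite[Prop.~1.2]{Acciaio_Penner} and the hypothesis $\rho^x_t:L^\infty_{\mathcal{F}_{t,\infty}}\to L^\infty_{\mathcal{F}_{t,t}}$) the analogous $\essinf$ for $\rho^x_t(Z\circ\theta_t)$ over $L^\infty_{\mathcal{F}_{t,t}}$, and finally matches the two index sets using \eqref{equivalence_acceptance_sets}. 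Your approach is more elementary and self-contained: it avoids the external citation and the $\essinf$ formalism by producing explicit witnesses $W,W'$ for each inequality. The paper's framing, on the other hand, makes the role of the acceptance-set characterisation of risk mappings more visible and connects directly to the standard literature. Both rely on the same core ingredients---constant translation invariance of $\rho^y$ and conditional translation invariance of $\rho^x_t$---but package them differently.
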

	
	\begin{proof}
		Necessity is obvious. Conversely, suppose that $\rho_t^x \colon b\mathcal{F}_{t,\infty} \to b\mathcal{F}_{t,t}$ and that the equivalence \eqref{equivalence_acceptance_sets} holds. Fix $Z \in b\mathcal{F}$, $t \in \N$ and $x \in E$.
		
		Step 1. We show that
		\begin{equation}
			\label{equality_for_rho_X_t}
			\rho^{X_t}(Z) = \essinf \{ Y=g(X_t) \in b\mathcal{F}_{t,t} \colon Z - Y \circ \theta_{-t} \in \tilde{\mathcal{A}}_t^{x} \}, \qquad \mathbb{P}^{x}\text{-a.s..}
		\end{equation}
		Proof of `$\geq$'. Let $g(y) := \rho^y(Z)$ for $y\in E$. Then $\mathbb{P}^y$-a.s. we have 
		$g(X_t)\circ \theta_{-t} = \rho^{X_0}(Z) = \rho^y(Z)$.
		Since $\rho^y(Z-g(X_t)\circ\theta_{-t}) = \rho^y(Z-\rho^y(Z)) = 0$ we have $\rho^{X_t}(Z-g(X_t)\circ\theta_{-t}) =0$, implying $Z - g(X_t)\circ \theta_{-t} \in \tilde{\mathcal{A}}_t^{x}$.
		
		Proof of `$\leq$'. Let $Y=g(X_t)$ belong to the set on the right-hand side of \eqref{equality_for_rho_X_t}. Then for $\Omega_1 = \{\omega \in \Omega: \rho^{X_t}(Z - Y \circ \theta_{-t}) \leq 0\}$ we have $\mathbb{P}^{x}(\Omega_1) = 1$. Let $\Omega_0 = \{\omega \in \Omega: \rho^{X_t}(Z)>Y\}$. 
		To show that $\Omega_0 \subset \Omega_1^c$, let $\omega \in \Omega_0$ and $x_t := X_t(\omega) = \omega(t)$. 
		Since $\omega \in \Omega_0$, we have that  $\rho^{X_t(\omega)}(Z) > Y(\omega)$, which is equivalent to $\rho^{x_t}(Z)>g(x_t)$.
		Then
		\begin{multline*}
			\rho^{X_t(\omega)}(Z-Y\circ \theta_{-t}) 
			= \rho^{x_t}(\1_{x_t}(X_0)(Z-Y\circ \theta_{-t})) \\
			= \rho^{x_t}(Z-g(x_t)) 
			= \rho^{x_t}(Z) - g(x_t)
			> 0,
		\end{multline*}
		i.e.\ $\omega \in \Omega_1^c$.
		Since $\mathbb{P}^x(\Omega_1^c) = 0$, it follows that $\mathbb{P}^x(\Omega_0) = 0$, which finishes the proof of the claim of Step 1.
		
		Step 2. To finish the proof, note from \cite{Acciaio_Penner}, Prop.\ 1.2 (modulo a minus sign which appears because \cite{Acciaio_Penner} considers decreasing risk mappings) that
		\begin{align*}
			\rho_t^x(Z \circ \theta_t) 
			& = \essinf \{ Y \in b\mathcal{F}_t \colon Z\circ \theta_t - Y \in \mathcal{A}_t^x \} \\
			& \leq \essinf \{ Y \in b\mathcal{F}_{t,t} \colon  Z \circ \theta_t - Y \in \mathcal{A}_t^x \} \leq \rho_t^x(Z \circ \theta_t),
		\end{align*}
		where the last inequality follows from the fact that $Y = \rho_t^x(Z \circ \theta_t) \in b\mathcal{F}_{t,t}$  and $Z \circ \theta_t - \rho_t^x(Z \circ \theta_t) \in \mathcal{A}_t^x$.
		Since for $Y \in b\mathcal{F}_{t,t}$ we have from \eqref{equivalence_acceptance_sets} that
		\[
		Z \circ \theta_t - Y \in \mathcal{A}_t^x \iff Z - Y \circ \theta_{-t} \in \tilde{\mathcal{A}}_t^x,
		\]
		Step 1 completes the proof. 
	\end{proof}
	
	\begin{remark}
		\label{rem_X_t_measurable}
		The above lemma implies in particular that for a Markovian risk map, if $Z \in b\mathcal{F}_{t,\infty}$, then $\rho_t^x(Z)$ is $\sigma(X_t)$-measurable.
	\end{remark}

	\section{Examples}\label{sec:examples}
	
	In this section we provide examples of Markovian families of dynamic conditional risk mappings. Note that the entropic and worst case risk mappings are time consistent (see \cite{Detlefsen_Scandolo}, Prop.\ 6 and \cite{Barron_Cardaliaguet_Jensen}, Th.\ 2.8(b)(ii) respectively), while the mean semi-deviation risk mapping and average value at risk are not (\cite{Follmer_Schied}, Ex.\ 11.13, \cite{Artzner_Delbaen_Eber_Heath_Ku}, p.\ 20-21). Below we take $Z \in b\mathcal{F}, t \in \N, x \in E$.

	\subsection{Composite risk mappings}\label{sec:crm}
	Let $K \in \N$ and for $k = 0,\ldots,K$ let $g_{k} \colon \mathbb{R}^{m_{k}} \times E \to \mathbb{R}$ be measurable functions bounded on compact sets such that $m_{0} = 1$ and $m_k = 2$ for $k\geq 1$, with the map $x \mapsto g_{k}(r_{k},x)$ bounded on $E$ for every $r_{k} \in \mathbb{R}^{m_{k}}$. Assume also that for each $x \in E$, the sequence $(\rho_t^x)_{t \in \N}$ is a dynamic conditional risk mapping, where $\rho_0^{x}(Z) = R_{K}^{x}(Z)$ and 
	$\rho^{x}_{t}(Z) = R_{K}^{x}(Z \vert \mathcal{F}_{t})$ for $t \geq 1$, with
	\begin{equation}\label{eq:Composite-RCRM}
		R_{k}^{x}(Z) = \begin{cases}
			\mathbb{E}^{x}\big[g_{0}(Z,X_{0})\big], & \text{if}\;\;k =0,\\
			\mathbb{E}^{x}\Big[g_{k}\big(Z,R_{k-1}^{X_{0}}(Z),X_{0}\big)\Big], &\text{if}\;\; 1 \le k \le K,
		\end{cases}
	\end{equation}
	\begin{equation}\label{eq:Composite-DCRM}
		R_{k}^{x}(Z \vert \mathcal{F}_{t}) = \begin{cases}
			\mathbb{E}^{x}\big[g_{0}(Z,X_{t}) \big\vert \mathcal{F}_{t}\big], & \text{if}\;\;k =0,\\
			\mathbb{E}^{x}\Big[g_{k}\big(Z,R_{k-1}^{x}(Z \vert \mathcal{F}_{t}),X_{t}\big) \big\vert \mathcal{F}_{t}\Big], &\text{if}\;\; k \ge 1.
		\end{cases}
	\end{equation}
	This family clearly includes the linear expectation ($K = 0$, $g_{0}(z,x) = z$) and its statistical estimation properties are studied in \cite{Dentcheva2017}.
	
	\begin{lemma}\label{Lemma:Composite-Dynamic-Conditional-Risk-Map-Markov-Property}
		The family of dynamic conditional risk mappings $\varrho =((\rho_t^x)_{t \in \N})_{x \in E}$ defined through \eqref{eq:Composite-RCRM} and \eqref{eq:Composite-DCRM} is Markovian.
	\end{lemma}
	
	\begin{proof}
		The Markov property holds at $k = 0$ since $\PP^x$-a.s.
		\begin{align*}
			R_{0}^{x}(Z \circ \theta_{t} \vert \mathcal{F}_{t}) & = \mathbb{E}^{x}\big[g_{0}(Z \circ \theta_{t},X_{t}) \big\vert \mathcal{F}_{t}\big] \\
			& = \mathbb{E}^{x}\big[g_{0}(Z,X_{0}) \circ \theta_{t} \big\vert \mathcal{F}_{t}\big] \\
			& = \mathbb{E}^{X_{t}}\big[g_{0}(Z,X_{0})\big] = R_{0}^{X_{t}}(Z).
		\end{align*}
		Assuming that it holds at $k-1$, the Markov property also holds at $k$:
		\begin{align*}
			R_{k}^{x}(Z \circ \theta_{t} \vert \mathcal{F}_{t}) & = \mathbb{E}^{x}\Big[g_{k}\big(Z \circ \theta_{t},R_{k-1}^{x}(Z \circ \theta_{t} \vert \mathcal{F}_{t}),X_{t}\big) \big\vert \mathcal{F}_{t}\Big] \\
			& = \mathbb{E}^{x}\Big[g_{k}\big(Z,R_{k-1}^{X_{0}}(Z),X_{0}\big) \circ \theta_{t} \big\vert \mathcal{F}_{t}\Big] \\
			& = \mathbb{E}^{X_{t}}\Big[g_{k}\big(Z,R_{k-1}^{X_{0}}(Z),X_{0}\big) \Big] = R_{k}^{X_{t}}(Z) \qquad \PP^x\text{-a.s.}
		\end{align*} 
	\end{proof}

	\subsubsection{Entropic risk mapping} \label{sec:erm}
	
	The entropic risk mapping (a special case of a certainty equivalent risk mapping, see \cite{Follmer_Schied}, Def.\ 2.36. or \cite{Bauerle2017b}) is Markovian since it is recovered from \eqref{eq:Composite-DCRM} by taking $K = 1$, $g_{1}(z,r,x) = \frac{1}{\gamma(x)}\ln(r)$ (restricting the domain of $r \mapsto g_{1}(z,r,x)$ to $(0,\infty)$) and $g_{0}(z,x) = e^{\gamma(x)z}$ in \eqref{eq:Composite-DCRM}, where $\gamma \colon E \to (0,\infty)$ is measurable and bounded away from both 0 and $\infty$, giving
	\begin{equation*}	
		\rho^{x}_{t}(Z) = 
		\begin{cases}
			\frac{1}{\gamma(x)}\ln\left(\mathbb{E}^{x}\left[e^{\gamma(x) Z} \right]\right), & t=0, \\
			\frac{1}{\gamma(X_{t})}\ln\left(\mathbb{E}^{x}\left[e^{\gamma(X_{t}) Z} \big\vert \mathcal{F}_{t}\right]\right), & t \geq 1.
		\end{cases}
	\end{equation*}

	\subsubsection{Mean-semideviation risk mapping}\label{sec:msm}
	
	Similarly, the mean--semideviation risk mapping satisfies the Markov property since it is recovered from \eqref{eq:Composite-DCRM} by taking $K = 2$, $g_{2}(z,r,x) = z + \kappa(x) \, r^{\frac{1}{p}}$, $g_{1}(z,r,x) = ((z - r)^{+})^{p}$ and $g_{0}(z,x) = z$ in \eqref{eq:Composite-DCRM}, where $\kappa \colon E \to [0,1]$ is measurable and $p \ge 1$ is an integer, 
	giving
	\begin{equation*}
		\rho^{x}_{t}(Z) = 
		\begin{cases}
			\mathbb{E}^{x}[Z] + \kappa(x)\left(\mathbb{E}^{x}\left[\big(\left(Z - \mathbb{E}^{x}[Z]\right)^{+}\big)^{p} \right]\right)^{\frac{1}{p}}, & t=0, \\
			\mathbb{E}^{x}[Z \vert \mathcal{F}_{t}] + \kappa(X_{t})\left(\mathbb{E}^{x}\left[\big(\left(Z - \mathbb{E}^{x}[Z \vert \mathcal{F}_{t}]\right)^{+}\big)^{p} \big\vert \mathcal{F}_{t}\right]\right)^{\frac{1}{p}}, & t \geq 1.
		\end{cases}
	\end{equation*}
	
	\subsection{Worst-case risk mapping}\label{sec:wcrm}
	The worst-case risk mapping is given by the family
	
	\begin{equation}\label{eq:WC-Dynamic-Conditional-Risk-Map}
		\rho^{x}_{t}(Z) = 
		\begin{cases}
			\mathbb{P}^{x}-\esssup(Z), & t = 0, \\
			\mathbb{P}^{x}-\esssup\left(Z\,\vert\,\mathcal{F}_{t}\right), & t \geq 1.
		\end{cases}
	\end{equation}
	For $t \geq 1$ this is the $\mathcal{F}_{t}$-conditional $\mathbb{P}^{x}$-essential supremum of $Z$, that is, the smallest $\mathcal{F}_{t}$-measurable random variable dominating $Z$ almost surely with respect to $\mathbb{P}^{x}$ \cite{Barron_Cardaliaguet_Jensen}, Prop.\ 2.6.

	\begin{lemma}\label{Lemma:WC-Dynamic-Conditional-Risk-Map-Markov-Property}
		The family of dynamic conditional risk mappings given by \eqref{eq:WC-Dynamic-Conditional-Risk-Map} is Markovian. 
	\end{lemma}
	
	\begin{proof}
		Supposing first that $Z$ is non-negative, then using \cite{Barron_Cardaliaguet_Jensen}, Prop.\ 2.12 and the Markov property of the conditional expectation, we have $\mathbb{P}^{x}$-a.s.:
		\begin{align*}
			\rho^{x}_{t}(Z \circ \theta_{t}) & = \lim_{p \to \infty}\left(\mathbb{E}^{x}\big[(Z \circ \theta_{t})^{p}\,\vert\, \mathcal{F}_{t}\big]\right)^{\frac{1}{p}} \nonumber \\
			& = \lim_{p \to \infty}\left(\mathbb{E}^{x}\big[Z^{p} \circ \theta_{t} \,\vert\, \mathcal{F}_{t}\big]\right)^{\frac{1}{p}} \nonumber \\
			& = \lim_{p \to \infty}\left(\mathbb{E}^{X_{t}}[Z^{p}]\right)^{\frac{1}{p}} = \rho^{X_{t}}(Z),
		\end{align*}
		while the case $t=0$ establishes measurability in $x$. For general $Z \in b\mathcal{F}$ we first set $Z_{c} \coloneqq Z + c$ with $c = \sup_{\omega}\vert Z(\omega) \vert$, then use translation invariance with respect to constants (see \cite{Barron_Cardaliaguet_Jensen}, Prop.\ 2.1),
		\[
		\rho^{x}_{t}(Z \circ \theta_{t}) = \rho^{x}_{t}(Z_{c} \circ \theta_{t}) - c = \rho^{X_{t}}(Z_{c}) - c = \rho^{X_{t}}(Z),
		\]
		completing the proof. 
	\end{proof}
	
	\subsection{Value at Risk}
	
	The value at risk may be defined by the family
	\begin{align}
		\rho^{x}_{t}(Z) = 
		\begin{cases}
			\VaR^{x}_{\lambda}(-Z), & t=0,\\
			\VaR^{x}_{\lambda}(-Z \vert \mathcal{F}_{t}), & t \geq 1, \label{eq:var-dynamic} 
		\end{cases}
	\end{align}
	where $\lambda \in (0,1)$,
	$$\VaR^{x}_{\lambda}(-Z) := \inf\{m \in \mathbb{R} \colon \mathbb{P}^{x}(m < Z) \leq \lambda \}$$
	and
	$$\VaR^{x}_{\lambda}(-Z \vert \mathcal{F}_{t}) := \mathbb{P}^x- \essinf\{m_{t} \in b\mathcal{F}_{t} \colon \mathbb{P}^{x}(m_{t} < Z \vert \mathcal{F}_{t}) \le \lambda\}$$
	for $t \geq 1$, see e.g.\ \cite{Follmer_Schied}, Sec.\ 4.4 \& Ex.\ 11.4.

	\begin{lemma}\label{lem:var-markov}
		The family of dynamic conditional risk mappings given by \eqref{eq:var-dynamic} is Markovian.
	\end{lemma}
	\begin{proof}
		We first show that $x \mapsto \rho^{x}(Z)$ is measurable. For $y \in \mathbb{R}$ we have
		\begin{align*}
			\{ x \in E : \rho^{x}(Z) < y \}
			&= \{ x \in E : \inf \{m \in \mathbb{R} \colon \mathbb{P}^{x}(m < Z) \le \lambda \} < y \} \\
			&= \{ x \in E : \exists m<y : \mathbb{P}^x(m < Z) \leq \lambda \} \\
			&= \{ x \in E : \exists m \in \mathbb{Q}, m<y \colon \mathbb{P}^x(m < Z) \leq \lambda \} \\
			&= \bigcup_{m\in (-\infty,y) \cap \mathbb{Q}} \{ x \in E : \mathbb{P}^x(m < Z) \leq \lambda \} \\
			&= \bigcup_{m\in (-\infty,y) \cap \mathbb{Q}} f_m^{-1}((-\infty,\lambda]),
		\end{align*}
		where for each $m\in \mathbb{R}$ the function $f_m \colon E \to \mathbb{R}$ is defined by $f_m(x) = \mathbb{P}^x(m < Z)$. 
		Note that each $f_m$ is measurable because 
		for every $A \in \mathcal{F}$ the mapping $E \ni x \mapsto \mathbb{P}^x(A)\in \mathbb{R}$ is measurable by the measurability of $E \ni x \mapsto \mathbb{P}^x\in \mathscr{P}(\mathcal{F})$.
		Thus $\{ x \in E \colon \rho^{x}(Z) < y \}$ is a measurable set.
		
		To show that $x \mapsto \rho^x(Z)$ is bounded note that, since $Z$ is bounded, there exists $M\in \mathbb{R}$ such that $\{\vert Z\vert <M\}=\Omega$. Thus $-M \leq \rho^x(Z) \leq M$ for all $x \in E$.
		
		Next we show that $\rho^{x}_{t}(Z \circ \theta_{t}) = \rho^{X_{t}}(Z)$ almost surely. Since $Z$ is bounded, let $m_{t}(X_{0},\ldots,X_{t}) \in b\mathcal{F}_{t}$ satisfy $\PP^x$-a.s.
		\[
		\mathbb{P}^{x}(m_{t}(X_{0},\ldots,X_{t}) < Z \circ \theta_{t} \vert \mathcal{F}_{t}) \le \lambda.
		\]
		Then for $\PP^x$-almost all $\omega \in \Omega$, by conditional locality and the Markov property we have 
		\begin{align*}
			\lambda & \geq \mathbb{P}^{x}(m_{t}(X_{0},\ldots,X_{t}) < Z \circ \theta_{t} \vert \mathcal{F}_{t})(\omega) \\
			& \quad = \mathbb{P}^{x}(m_{t}(X_{0}(\omega),\ldots,X_{t}(\omega)) < Z \circ \theta_{t} \vert \mathcal{F}_{t})(\omega) \\
			& \quad = \mathbb{P}^{X_t(\omega)}(m_{t}(X_{0}(\omega),\ldots,X_{t}(\omega)) < Z ),
		\end{align*}
		giving $m_{t}(\omega) \ge \rho^{X_{t}(\omega)}(Z)$. We conclude that $\rho^{x}_{t}(Z \circ \theta_{t}) \ge \rho^{X_{t}}(Z)$ almost surely under $\mathbb{P}^{x}$.
		
		Conversely we have by the Markov property that $\mathbb{P}^{x}\text{-a.s.}$,
		\begin{align*}
			\mathbb{P}^{x}(\rho^{X_{t}}(Z) < Z \circ \theta_{t}  \vert \mathcal{F}_{t})(\omega) &=  \mathbb{P}^{X_{t}(\omega)}(\rho^{X_{0}}(Z) < Z) \\
			&= \mathbb{P}^{X_{t}(\omega)}(\rho^{X_{t}(\omega)}(Z) < Z) \leq \lambda,
		\end{align*}
		and, since $\omega \mapsto \rho^{X_{t}(\omega)}(Z)$ is bounded and $\mathcal{F}_{t}$-measurable, we conclude that  $\rho^{x}_{t}(Z \circ \theta_{t}) \le \rho^{X_{t}}(Z)$. 
	\end{proof}

	\subsection{Average Value at Risk}
	
	For $\lambda \in (0,1)$ the average value at risk (see \cite{Acciaio_Penner}, Ex.\ 1.10) may be defined by the following family of dynamic conditional risk mappings:
	\begin{align}
		\rho^{x}_{t}(Z) = 
		\begin{cases}
			\text{AVaR}_{\lambda}^x(-Z), & t = 0, \\
			\text{AVaR}_{\lambda,t}^x(-Z), & t \geq 1, \label{eq:avar} 
		\end{cases}
	\end{align}
	where
	$$\text{AVaR}_{\lambda}^x(-Z) = 
	\E^{x}\left[\VaR^{x}_{\lambda}(-Z) +  \frac{1}{\lambda}(Z - \VaR^{x}_{\lambda}(-Z))^{+}\right]$$
	and
	$$\text{AVaR}_{\lambda,t}^x(-Z) =\E^{x}\left[\VaR^{x}_{\lambda}(-Z \vert \mathcal{F}_{t}) +  \frac{1}{\lambda}(Z - \VaR^{x}_{\lambda}(-Z \vert \mathcal{F}_{t}))^{+} \Big\vert \mathcal{F}_{t}\right]$$
	for $t \geq 1$.
	
	\begin{lemma}
		The family of dynamic conditional risk mappings given by \eqref{eq:avar} is Markovian.
	\end{lemma}
	
	\begin{proof}
		This follows from the Markov property for $\VaR^{x}_{\lambda}(\cdot \vert \mathcal{F}_{t})$ (Lemma \ref{lem:var-markov}), since $\PP^x$-a.s.
		\begin{align*}
			\text{AVaR}_{\lambda,t}^x(-Z \circ \theta_t)
			&= \VaR_\lambda^{X_t}(-Z) + \E^x \left[ \frac{1}{\lambda} (Z \circ \theta_t - \VaR_\lambda^{X_t}(-Z))^+ \big\vert \mathcal{F}_t \right] \\
			&= \VaR_\lambda^{X_t}(-Z) + \E^{X_t} \left[ \frac{1}{\lambda} (Z - \VaR_\lambda^{X_0}(-Z))^+ \right] \\
			&= \AVaR_\lambda^{X_t}(-Z).
		\end{align*}
	\end{proof}

	\section{Dual representation of convex Markovian risk mappings}
	\label{sec:dual}
	
	In this section we characterise the dual representation of convex Markovian risk mappings. Recalling from Section \ref{sec:mkc} that $(q^X(B\vert x) \colon B \in \mathcal{E}, x \in E)$ is the kernel associated to the Markov process $X$ under $\mathbb{P}$, we begin with the necessary definitions:

	\begin{definition}\label{def:amp}
		\begin{enumerate}
			\item[(i)] $\mathcal{R} \colon E \times b\mathcal{E} \to \mathbb{R}$ is a transition risk mapping (cf.\ \cite{Cavus_Ruszczynski,Fan2018a,Ruszczynski}) if:
			\begin{itemize}
				\item[\textbullet] for all $f \in b\mathcal{E}$, $x \mapsto \mathcal{R}(x,f)$ is bounded and measurable,
				\item[\textbullet] for all $x \in E$, $f \mapsto \mathcal{R}(x,f)$ satisfies
				\begin{itemize}
					\item {\it normalisation}: $\mathcal{R}(x,0) = 0$,
					\item {\it monotonicity}: $\mathcal{R}(x,f) \le \mathcal{R}(x,g)$ for all $f \le g$,
					\item {\it constant translation invariance}: $\mathcal{R}(x,f+c) = \mathcal{R}(x,f) + c$ for all constants $c$.
				\end{itemize}
			\end{itemize}
			
			\item[(ii)] A transition risk mapping is convex if for all $x \in E$, $f,g \in b\mathcal{E}$ and $\lambda \in [0,1]$ we have 
			\[
			\mathcal{R}(x,\lambda f + (1-\lambda) g) \leq \lambda  \mathcal{R}(x,f) + (1-\lambda) \mathcal{R}(x,g).
			\]
		\end{enumerate}
	\end{definition}
	
	Note that by Definitions \ref{Definition:Dynamic-Conditional-Risk-Mapping} and \ref{defn:reg}, a transition risk mapping can be derived from a regular collection of risk mappings $(\rho^x)_{x \in E}$ by writing
	\begin{equation}
		\label{mathcal_R}
		\mathcal{R}(x,f) := \rho^{x}(f(X_{1})) \qquad \text{for } f \in b\mathcal{E}.
	\end{equation}
	If the transition risk mapping $\mathcal{R}$ defined by \eqref{mathcal_R} is convex and continuous from below it has the following dual representation (cf.\ \cite{Follmer_Penner}, Th.\ 2.3):
	\begin{equation}
		\label{dual_2}
		\mathcal{R}(x,f)
		= \sup_{\substack{Q \in \mathscr{P}(\mathcal{E}), \\ Q \ll \mathbb{P}^x \circ X_1^{-1}}} \left( \int_E f(y) \, Q(\ud y) - \alpha^x(Q) \right),
	\end{equation}
	where the penalty functions $\alpha^x : \mathscr{P}(\mathcal{E}) \to \mathbb{R}$
	are defined by 
	\begin{equation*}
		\alpha^x(Q) 
		= \sup_{g \in b\mathcal{E}} \left( \E_Q[g] - \mathcal{R}(x,g) \right).
	\end{equation*}

	Letting $\mathcal{K}$ denote the set of kernels $q$ such that $q(\cdot\vert x)$ is absolutely continuous with respect to $q^X(\cdot\vert x)$ for every $x \in E$, we have the following proposition:
	
	\begin{proposition}
		\label{pro_dual_rep_1_step}
		Let $\varrho:=((\rho_t^x)_{t \in \N})_{x \in E}$ be a family of dynamic conditional risk mappings such that $(\rho^x)_{x \in E}$ is regular and each risk mapping $\rho^x$ is convex and continuous from below. Then $\varrho$ satisfies the one-step Markov property \eqref{1_step_Markov} if and only if for all $x \in E$ and $f \in b\mathcal{E}$ we have
		\begin{align}
			\label{dual_rep_kernels}
			\rho^x(f(X_{1})) &= \sup_{q \in \mathcal{K}} \left( \int_E f(y) \, q(\ud y \vert x) - \alpha^{x}(q(\cdot\vert x)) \right), \\
			\label{dual_rep_1_step}
			\rho_t^x(f(X_{t+1})) & = 
			\sup_{q \in \mathcal{K}} \left( \int_E f(y) \, q(\ud y \vert X_t) - \alpha^{X_t}(q(\cdot\vert X_t)) \right), & t = 1,2,\ldots.
		\end{align}
		$\PP^x$-almost surely.
	\end{proposition}
	\begin{proof}
		Using kernels, for all $x \in E$ and $f \in b\mathcal{E}$ the representation \eqref{dual_2} can be rewritten as \eqref{dual_rep_kernels}.
		Indeed, it is clear that the right-hand side of \eqref{dual_rep_kernels} is less than or equal to the right-hand side of \eqref{dual_2}. For the reverse inequality, simply note that for every given $x \in E$ and every $Q \in \mathscr{P}(\mathcal{E})$ such that $Q \ll \mathbb{P}^x \circ X_1^{-1}$, we can associate a kernel $q \in \mathcal{K}$ by setting $q( \cdot \vert x') = Q\1_{\{x\}}(x') + (\mathbb{P}^{x'} \circ X_1^{-1})(1-\1_{\{x\}}(x'))$. Then Equation \eqref{dual_rep_kernels} implies
		$$\sup_{q \in \mathcal{K}} \left( \int_E f(y) \, q(\ud y \vert X_t) - \alpha^{X_t}(q(\cdot \vert X_t)) \right) 
		= \rho^{X_t}(f(X_1)),$$
		which shows that \eqref{dual_rep_1_step} is equivalent to the one step Markov property \eqref{1_step_Markov}. 
	\end{proof}
	
	Note that in \eqref{dual_rep_kernels} we take the supremum (rather than essential supremum) over a potentially uncountable family of kernels. Therefore the regularity of the collection $(\rho^x)_{x \in E}$ follows from the assumptions of Proposition \ref{pro_dual_rep_1_step} rather than from \eqref{dual_rep_kernels}.
	
	\begin{corollary}
		\label{cor_general_Markov}
		Let $\varrho:=((\rho_t^x)_{t \in \N})_{x \in E}$ be a time-consistent family of dynamic conditional risk mappings such that each risk mapping $\rho^x$ is convex and continuous from above. Then $\varrho$ satisfies \eqref{dual_rep_kernels}--\eqref{dual_rep_1_step} (for all $x \in E$ and $f \in b\mathcal{E}$) iff the Markov property of Definition \ref{Definition:Markov-Conditional-Risk-Mapping} holds.
	\end{corollary}
	
	\begin{proof}
		Combining Lemma 4.21 and Theorem 4.22 in \cite{Follmer_Schied}, we see that a convex conditional risk mapping that is continuous from above is also continuous from below (recall the sign difference in our work). 
		The corollary is then an application of Proposition \ref{pro_general_Markov_property} to Proposition \ref{pro_dual_rep_1_step}. 
	\end{proof}
	
	The following example identifies a maximising kernel in \eqref{dual_rep_1_step} in the case of the entropic risk mapping of Section \ref{sec:erm}.
	
	\begin{example}[Entropic risk]
		\label{exa_entropic_risk_dual_rep}
		For $q \in \mathcal{K}$ let
		\begin{equation}\label{eq:emp}
			\alpha^x(q(\cdot\vert x)) = \frac{1}{\gamma(x)} \int_E \ln \frac{\ud q(\cdot \vert x)}{\ud  q^X(\cdot \vert x)}(y) \, q(\ud y \vert x).
		\end{equation}
		Fixing a bounded, measurable function $f\colon E \to \mathbb{R}$, define the kernel $q_{op}$ by
		\begin{equation}
			\label{optimal_kernel}
			\frac{\ud q_{op}(\cdot \vert x)}{\ud q^X(\cdot\vert x)}(y) 
			= \frac{e^{\gamma(x) f(y)}}{\int_E e^{\gamma(x) f(z)} \, q^X(\ud z\vert x) }.
		\end{equation}
		It is well known (\cite{Detlefsen_Scandolo}, Rem. 9) from the non-Markovian setting that for each $x \in E$ the function \eqref{eq:emp} is the minimal penalty corresponding to the entropic risk mapping on $b\mathcal{E}$ and that \eqref{optimal_kernel} defines a measure attaining the maximum in \eqref{dual_rep_1_step}.
		In order to show that $q_{op}$ defined in this way is indeed a kernel we show that for each $A \in \mathcal{E}$ the function $x \mapsto q_{op}(A\vert x)$ is measurable. 
		Indeed we have $q_{op}(A\vert x) = \int_E  \1_A(y) \frac{e^{\gamma(x)f(y)}}{\int_E e^{\gamma(x)f(z)} \, q^X(\ud z\vert x)} \, q^X(\ud y\vert x)$ and measurability follows since more generally, for any jointly measurable function $g\colon E \times E \to \mathbb{R}$ and any kernel $p$, the function $x \mapsto \int_E g(x,y) \, p(\ud y \vert x)$ is measurable.
	\end{example}
	
	\section{Applications}
	\label{sec:Risk-Averse-Optimal-Stopping}
	The probabilistic Markov property can provide a convenient tool to address, for example, optimal stopping problems with costs which are measurable only after the chosen stopping time. A first example is the case of exercise lag, where we seek
	\begin{equation*}
		L^T(x):=\inf_{\tau \in \mathscr{T}_{[0,T]}} \rho^x \left( \sum_{i=0}^{\tau-1} c(X_i) + g(X_{\sigma \circ \theta_\tau + \tau}) \right),
	\end{equation*}
	where functions $c, g:E \mapsto \mathbb{R}$ and a potentially unbounded stopping time $\sigma \in \mathscr{T}$ represent respectively an observation cost, exercise cost and exercise lag, and $\varrho=((\rho_t^x)_{t \in \N})_{x \in E}$ is a time-consistent Markovian family of dynamic risk mappings.
	The strong Markov property of Proposition \ref{pro_strong_Markov} then allows dynamic programming to be applied indirectly by first transforming the objective function. 
	Indeed it then follows by the recursive property \eqref{recursivity_stopping_times}, conditional locality, conditional translation invariance, the identity 
	$X_{\sigma \circ \theta_\tau + \tau} = X_\sigma \circ \theta_\tau$ and the strong Markov property that
	\begin{align*}
		L^T(x)&=\inf_{\tau \in \mathscr{T}_{[0,T]}}  \rho^x \left( \rho_\tau^x \left( \sum_{i=0}^{\tau-1} c(X_i) + g(X_{\sigma \circ \theta_\tau + \tau}) \right)\right) \nonumber\\
		&= \inf_{\tau \in \mathscr{T}_{[0,T]}}  \rho^x \left(\sum_{t=0}^T \rho_t^x \left( \1_{\{\tau=t \}} \sum_{i=0}^{t-1} c(X_i) + \1_{\{\tau=t \}} g(X_{\sigma \circ \theta_\tau + \tau}) \right) \right) \nonumber\\
		&= \inf_{\tau \in \mathscr{T}_{[0,T]}}  \rho^x \left( \sum_{i=0}^{\tau-1} c(X_i) + h(X_\tau)  \right),
	\end{align*}
	where $h(x) \coloneqq \rho^x(g(X_\sigma))$, and standard dynamic programming arguments can then be applied to obtain the Wald-Bellman equations
	\begin{equation*}
		\begin{cases}
			L^{0}(x) = h(x),& \\
			L^{m}(x) = h(x) \wedge \left(c(x) + \rho^{x}\big( L^{m-1}(X_{1})\big)\right),& m = 1,\ldots,T.
		\end{cases}
	\end{equation*}
	
	In the optimal prediction problem of the next section, use of the probabilistic Markov property enables dynamic programming to instead be applied directly.

	\subsection{Optimal prediction}
	\label{sec:op}
	
	Generalising \eqref{sensitive_opimal_pred}, let
	\begin{equation*}
		V_\text{pred}^T(x) := \inf_{\tau \in \mathscr{T}_{[0,T]}} \rho^x(g(X_T^* - X_\tau)),
	\end{equation*}
	where $x \in E = \mathbb{R}$, $\Omega = \mathbb{R}^{\mathbb{N}_0}$, $T \in \N$, $\varrho=((\rho_t^x)_{t \in \N})_{x \in E}$ is a Markovian family of dynamic conditional risk mappings, $X_T^* = \max_{0 \leq s \leq T} X_s$ and $g:\mathbb{R} \to \mathbb{R}$ is bounded and measurable.
	
	We extend this probability space to include the process' running maximum by letting $\tilde{\Omega} = (\mathbb{R} \times \mathbb{R})^{\N}$. On this space, we have the canonical process $(X_t(\tilde{\omega}),M_{t}(\tilde{\omega})) = (\tilde{\omega}^{1}(t),\tilde{\omega}^{2}(t)) = \tilde{\omega}(t)$.
	Setting $\tilde{\mathbb{F}} = (\tilde{\mathcal{F}}_{t})_{t \in \N}$ with $\tilde{\mathcal{F}}_{t} = \sigma(\{(X_{s},M_{s}) \colon s \le t\})$ and $\tilde{\mathcal{F}} = \sigma \left(\cup_{t} \tilde{\mathcal{F}}_{t}\right)$, there exists (see, for example, \cite{Cinlar2011}, Th.\ 4.4.18) a unique probability measure $\tilde{\mathbb{P}}^{x,m}$ on $(\tilde{\Omega},\tilde{\mathcal{F}})$ such that $(X,M)$ is a time-homogeneous Markov chain on $(\tilde{\Omega},\tilde{\mathcal{F}},\tilde{\mathbb{F}},\tilde{\mathbb{P}}^{x,m})$ with $\tilde{\mathbb{P}}^{x,m}(X_{0} = x, M_{0} = m) = 1$ and transition kernel $q^{X,M}$ satisfying 
	$q^{X,M}(\ud x',\ud m' \vert x, m) = \delta_{m\vee x'}(\ud m') \, q^{X}(\ud x' \vert x)$ for all $(x, m) \in \mathbb{R}^2$.
	Note that $\tilde{\mathbb{P}}^{x,m}(M_{n} = X_n^*\vee m) = 1$ and, in particular, for $m=x$ we have $\tilde{\mathbb{P}}^{x,x}(M_{n} = X_n^*) = 1$. 
	Recalling Proposition \ref{pro_associated_dynamic}, define a regular collection of risk mappings by
	\[
	\rho^{x,m}(f(X_0, M_0, X_1, M_1, \ldots)) := \rho^x(f(X_0, m, X_1, X_1^* \vee m, \ldots)),
	\]
	and let $((\rho_t^{x,m})_{t \in \N})_{x,m \in \mathbb{R}}$ be the associated Markovian family of dynamic conditional risk mappings.
	
	\begin{theorem} If $((\rho_t^x)_{t \in \N})_{x \in E}$ is time consistent then, for each bounded measurable function $g:\mathbb{R} \to \mathbb{R}$,
		the extended value function 
		\begin{align*}
			\tilde{V}^T(x,m) := \inf_{\tau \in \mathscr{T}_{[0,T]}} \rho^{x,m}(g(M_T-X_\tau))
		\end{align*}
		satisfies the following modified Wald-Bellman equations:
		\begin{align*}
			\tilde{V}^0(x,m) &= g(m-x), \\
			\tilde{V}^n(x,m) &= \rho^{x,m}(g(M_n-x)) \wedge \rho^{x,m}(\tilde{V}^{n-1}(X_1,M_1)).
		\end{align*}
		The optimal prediction problem \eqref{sensitive_opimal_pred} satisfies $V_{\text{pred}}^n(x) = \tilde{V}^n(x,x)$.
	\end{theorem}
	
	\begin{proof} 
		Set
		\begin{align*}
			S_T^T &= g(M_T-X_T), \\
			S_n^T &= \rho_n^{x,m}(g(M_T - X_n)) \wedge \rho_n^{x,m}(S_{n+1}^T). 
		\end{align*}
		Following the outline of \cite{Peskir2006}, Sec.\ 1.2, we may now proceed in five steps:
		
		\textbf{Step 1.} We show that for all 
		$n = 0,1,\ldots,T$ and $k = T-n, T-n-1, \ldots, 0$, we have
		$$S_k^{T-n} \circ \tilde{\theta}_n = S_{k+n}^T.$$
		One can easily see that the claim is true for $k=T-n$. Further, by the Markov property and backward induction, for all $n$ the random variable $S_n^T$ is $\sigma(M_n,X_n)$-measurable (cf.\ Remark \ref{rem_X_t_measurable}).
		All subsequent equalities hold $\tilde{\PP}^{x,m}$-almost surely.
		For any $Z = \hat Z \circ \tilde{\theta}_k \in b\tilde{\mathcal{F}}_{k,\infty}$ we have 
		\begin{align*}
			\rho^{x,m}_{k}(Z) \circ \tilde{\theta}_{n} & = \rho^{x,m}_{k}(\hat Z \circ \tilde{\theta}_k) \circ \tilde{\theta}_{n} =
			\rho^{X_k,M_k}(\hat Z) \circ \tilde{\theta}_{n} = \rho^{X_{k+n},M_{k+n}}(\hat Z) \\
			& = \rho^{x,m}_{k+n}(\hat Z \circ \tilde{\theta}_{k+n}) = \rho^{x,m}_{k+n}(Z \circ \tilde{\theta}_{n}).
		\end{align*}
		This and the induction hypothesis imply
		\begin{align*}
			S_k^{T-n} \circ \tilde{\theta}_n 
			&= \rho_k^{x,m}(g(M_{T-n} - X_k)) \circ \tilde{\theta}_n \wedge \rho_k^{x,m}(S_{k+1}^{T-n}) \circ \tilde{\theta}_n \\
			&= \rho_{k+n}^{x,m}(g(M_{T} - X_{k+n})) \wedge \rho_{k+n}^{x,m}(S_{k+1}^{T-n} \circ \tilde{\theta}_n) \\
			&= \rho_{k+n}^{x,m}(g(M_{T} - X_{k+n})) \wedge \rho_{k+n}^{x,m}(S_{k+n+1}^T) \\
			&= S_{k+n}^T.
		\end{align*}
		
		\textbf{Step 2.} Let $\tau_n^T := \inf \{k=n,\ldots, T : S_k^T = \rho_k^{x,m}(g(M_T-X_k)) \}$. We show that $\tau_n^T = n + \tau_0^{T-n} \circ \tilde{\theta}_n$.
		
		Indeed, 
		\begin{align*}
			\tau_n^T
			&= \inf \{k=n,\ldots, T : S_{k-n}^{T-n} \circ \tilde{\theta}_n 
			= \rho_{k-n}^{x,m}(g(M_{T-n}-X_{k-n})) \circ \tilde{\theta}_n \} \\
			&= n + \inf \{k=0,\ldots, T-n : S_{k}^{T-n} \circ \tilde{\theta}_n = \rho_{k}^{x,m}(g(M_{T-n}-X_{k})) \circ \tilde{\theta}_n \} \\
			&= n + \tau_0^{T-n} \circ \tilde{\theta}_n.
		\end{align*}
		
		\textbf{Step 3.} We show that for $n=T, \ldots, 0$ we have
		\begin{equation}
			\label{optimality2}
			S_n^T = \rho_n^{x,m}(g(M_T-X_{\tau_n^T}))
		\end{equation}
		Note that on $\{\tau_{n-1}^T \geq n\}$ we have $\tau_{n-1}^T = \tau_n^T$ (by definition of these stopping times).
		From this and time consistency we have
		\begin{multline*}
			\rho_{n-1}^{x,m}(g(M_T-X_{\tau_{n-1}^T})) \\
			= \1_{\{\tau_{n-1}^T=n-1\}} \rho_{n-1}^{x,m}(g(M_T-X_{n-1})) + \1_{\{\tau_{n-1}^T \geq n\}} \rho_{n-1}^{x,m}(\rho_n^{x,m}(g(M_T-X_{\tau_n^T})).
		\end{multline*}
		By the induction hypothesis
		\begin{equation}\label{intermediate_step5}
			\begin{split}
				\rho_{n-1}^{x,m}(g(M_T-X_{\tau_{n-1}^T}))
				= {} & \1_{\{\tau_{n-1}^T=n-1\}} \rho_{n-1}^{x,m}(g(M_T-X_{n-1})) \\
				& + \1_{\{\tau_{n-1}^T \geq n\}} \rho_{n-1}^{x,m}(S_n^T).
			\end{split}
		\end{equation}
		Note that
		\begin{align*}
			S_{n-1}^T &= \rho_{n-1}^{x,m}(g(M_T-X_{n-1})) \qquad \text{ on } \qquad \{ \tau_{n-1}^T = n-1 \}, \\
			S_{n-1}^T &= \rho_{n-1}^{x,m}(S_n^T) \qquad \text{ on } \qquad \{ \tau_{n-1}^T \geq n \}.
		\end{align*}
		Thus, \eqref{intermediate_step5} implies that $\rho_{n-1}^{x,m}(g(M_T-X_{\tau_{k-1}^T})) = S_{n-1}^T$.
		
		\textbf{Step 4.}
		We prove that
		\begin{equation}
			\label{claim_of_step_4}
			S_n^T = \tilde{V}^{T-n}(X_n,M_n).
		\end{equation} 
		
		We have
		\begin{multline}
			\label{equality_in_step_4}
			S_n^T
			= \rho_n^{x,m}(g(M_T-X_{\tau_n^T}))
			= \rho_n^{x,m}(g(M_T-X_{n + \tau_0^{T-n} \circ \tilde{\theta}_n})) \\
			= \rho_n^{x,m}(g(M_{T-n}-X_{\tau_0^{T-n}}) \circ \tilde{\theta}_n)
			= \rho^{X_n,M_n}(g(M_{T-n}-X_{\tau_0^{T-n}})).
		\end{multline}
		On the other hand, one can show by induction that for each $k=T, \ldots, 0$ and
		every $\tau\in \mathscr{T}_{[k,T]}$ we have $\rho_k^{x,m}(g(M_T-X_\tau)) \geq S_k^T$. The claim is true for $k=T$, and we may write 
		\begin{align*}
			\rho^{x,m}_{k-1}(g(M_T-X_\tau)) 
			&= \1_{\{\tau=k-1\}} \rho_{k-1}^{x,m}(g(M_T-X_\tau)) \\ &\quad + \1_{\{\tau \geq k\}} \rho_{k-1}^{x,m}(\rho_k^{x,m}(g(M_T-X_{\tau\vee k}))).
		\end{align*}
		By the induction hypothesis, since $\tau \vee k \in \mathscr{T}_{[k,T]}$ we have
		\begin{align*}
			\rho^{x,m}_{k-1}(g(M_T-X_\tau)) 
			&\geq \1_{\{\tau=k-1\}} \rho_{k-1}^{x,m}(g(M_T-X_k)) + \1_{\{\tau \geq k\}} \rho_{k-1}^{x,m}(S_k^T) \\
			&\geq  \1_{\{\tau=k-1\}} S_{k-1}^T + \1_{\{\tau \geq k\}} S_{k-1}^T \\
			&=S_{k-1}^T.
		\end{align*}
		In particular for $k=0$ we conclude by Step 3 that for every $T \in \mathbb{N}_0$, the stopping time $\tau_0^T$ is optimal and $\tilde{V}^T(x,m)=\rho^{x,m}(g(M_T-X_{\tau_0^T}))$. Combining this with \eqref{equality_in_step_4} gives \eqref{claim_of_step_4}.
		
		\textbf{Step 5.}
		We have by the previous step and the Markov property that
		\begin{align*}
			\tilde{V}^{T-n}(X_n,M_n) 
			&= S_n^T \\
			&= \rho_n^{x,m}(g(M_T-X_n)) \wedge \rho_n^{x,m}(S_{n+1}^T) \\
			&= \rho_n^{x,m}(g(M_N-X_n)) \wedge \rho_n^{x,m}(\tilde{V}^{T-n-1}(X_{n+1},M_{n+1})) \\
			&= \rho_n^{x,m}(g(M_T-X_n)) \wedge \rho^{X_n,M_n}(\tilde{V}^{T-n-1}(X_1,M_1)). 
		\end{align*}
		Taking $n=0$ we get $\tilde{V}^{T}(x,m) = \rho^{x,m}(g(M_T-x)) \wedge \rho^{x,m}(\tilde{V}^{T-1}(X_1,M_1))$, and the result follows by construction.
	\end{proof}
	
	\begin{appendix}
		\section{Monotone Class Theorem}
		\label{sec:MCT}
		For the reader's convenience we state the monotone class theorem in the form given in Th.\ 2.12.9 \cite{Bogachev_I}.
		
		\begin{theorem}
			\label{th_from_Bogachev}
			Let $\mathscr{H}$ be a class of real functions on a set $\Omega$ such that $1 \in \mathscr{H}$ and let $\mathscr{H}_0$ be a subset in $\mathscr{H}$. Then, any of the following conditions yields that $\mathscr{H}$ contains all bounded functions measurable with respect to the $\sigma$-algebra generated by $\mathscr{H}_0$:
			\begin{enumerate}
				\item[(i)] $\mathscr{H}$ is a closed linear subspace in the space of all bounded functions on $\Omega$ with the norm $\vert f \vert := \sup_\Omega \vert f(\omega) \vert$ such that $\lim_{n\to\infty} f_n \in \mathscr{H}$ for every increasing uniformly bounded sequence of nonnegative functions $f_n \in \mathscr{H}$, and, in addition, $\mathscr{H}_0$ is closed with respect to multiplication (i.e., $f g \in \mathscr{H}_0$ for all functions $f, g \in \mathscr{H}_0$).
				\item[(ii)] $\mathscr{H}$ is closed with respect to the formation of uniform limits and monotone limits and $\mathscr{H}_0$ is an algebra of functions (i.e., $f + g$, $cf$, $f g \in \mathscr{H}_0$ for all $f, g \in \mathscr{H}_0$, $c \in \mathbb{R}$) and $1 \in \mathscr{H}_0$.
				\item[(iii)] $\mathscr{H}$ is closed with respect to monotone limits and $\mathscr{H}_0$ is a linear space containing $1$ such that $\min(f, g) \in \mathscr{H}_0$ for all $f, g \in \mathscr{H}_0$.
			\end{enumerate}
		\end{theorem}
	\end{appendix}

	
\providecommand{\etalchar}[1]{$^{#1}$}

\end{document}